\newtheorem{thm}{Theorem}[section]
\newtheorem{prop}[thm]{Proposition}
\theoremstyle{definition}
\theoremstyle{remark}
\numberwithin{equation}{section}
\theoremstyle{remark}
\newcommand{\mbb}{\mathbb}
\newcommand{\ra}{\rightarrow}
\newcommand{\pa}{\partial}
\newcommand{\ov}{\overline}
\newcommand{\sm}{\setminus}
\newcommand{\ep}{\epsilon}
\newcommand{\no}{\noindent}
\newcommand{\al}{\alpha}
\newcommand{\Om}{\Omega}
\newcommand{\cal}{\mathcal}
\newcommand{\ti}{\tilde}
\newcommand{\la}{\lambda}
\newcommand{\de}{\delta}
\newcommand{\ga}{\gamma}
\newcommand{\Ga}{\Gamma}
\newcommand{\Si}{\Sigma}
\begin{document}
\title{Notes on the boundaries of quadrature domains}
\thanks{The author was supported in part by a UGC--CAS Grant}
\author{Kaushal Verma}

\address{Kaushal Verma: Department of Mathematics, Indian Institute of Science, Bangalore 560 012, India}
\email{kverma@math.iisc.ernet.in}

\begin{abstract}
We highlight an intrinsic connection between classical quadrature domains and the well-studied theme of removable singularities of analytic sets in several complex variables. Exploiting this connection provides a new framework to recover several basic properties of such domains, namely the algebraicity of their boundary, a better understanding of the associated defining polynomial and the possible boundary singularities that can occur.
\end{abstract}

\maketitle

\section{Introduction}
\noindent Recall that a bounded domain $\Om \subset \mbb C$ is called a (classical) quadrature domain for the test class $L^1_{\cal O}(\Om)$ of integrable holomorphic functions on $\Om$ if there exist finitely points $q_1, q_2, \ldots, q_m \in \Om$, positive integers $n_1, n_2, \ldots, n_m$ and constants $c_{j \al}$ such that
\begin{equation}
\int_{\Om} f \; dA = \sum_{j = 1}^{m} \sum_{\al = 0}^{n_j - 1} c_{j \al} f^{(\al)}(q_j)
\end{equation}
for every $f \in L^1_{\cal O}(\Om)$. Here $dA$ is the standard area measure on the plane. This condition, which is also called the quadrature identity, can be interpreted as saying that integration as a functional on the given test space can be represented as a finite linear combination of point masses and some of their derivatives. The points $q_1, q_2, \ldots, q_m$ are called the nodes and the order of the quadrature domain $\Om$ is the cardinality of the support of these masses counting multiplicities, i.e., $N = N(\Om) = n_1 + n_2 + \ldots + n_m$. A prototypical example of a quadrature domain is the disc $B(a, r) \subset \mbb C$ for which the associated quadrature identity is the mean value property of holomorphic functions and this implies that the disc has order one. Belying all initial hunches about the stringency of  (1.1), quadrature domains possess a number of remarkable properties and perhaps the most intriguing one is that the boundary $\pa \Om$ is algebraic (with the possible exception of finitely many points). More precisely,
\[
\pa \Om = \{ z \in \mbb C : P(z, \ov z) = 0 \} \setminus \{\text {possibly finitely many points}\}
\]
where $P(z, w)$ is an irreducible polynomial in the variables $(z, w) \in \mbb C^2$ satisfying
\[
P(z, w) = \ov{P(\ov w, \ov z)}.
\]
Being a quadrature domain is equivalent to the existence of a meromorphic function (the Schwarz function) $S(z)$ on $\Om$ that extends continuously to the boundary and equals $\ov z$ on $\pa \Om$ and the intrinsic utility of $S(z)$ lies in the fact that it enables the compact Riemann surface corresponding to $P(z, w)$ to be identified with the Schottky double of $\Om$. The construction of $S(z)$ shows that its poles are precisely at the nodes $q_i$ ($1 \le i \le m$) with corresponding multiplicity $n_i$ ($1 \le i \le m$).

\medskip

The purpose of these notes is to provide an alternate method to obtain these well known results by using an intrinsic link between quadrature domains and the theme of removable singularities in several complex variables. Indeed, the graph of $S(z)$ is a pure $1$-dimensional analytic set in $\Om \times \mathbb P^1$ whose boundary is contained in the totally real manifold
$M = \{ (z, w) \in \mbb C^2 : w = \ov z\}$ and since points of $\Om$ correspond to points on the graph of $S$, it follows that understanding the boundary of $\Om$ reduces to understanding the behavior of the analytic set near $M$. Various aspects of this situation, namely an analytic set with boundary in a totally real manifold, have been studied but what is perhaps most relevant is the Alexander--Becker theorem (\cite{Al}, \cite{Be}) which asserts that such analytic sets admit analytic continuation across totally real obstructions as analytic sets. This approach has several benefits. To start with, it yields an alternate proof of the classical Aharonov--Shapiro algebraicity result \cite{AS}, a direct consequence of which is the aforementioned fact that quadrature domains have algebraic boundaries. It is also possible to recover Gustafsson's result \cite{Gu} which asserts that
\[
P(z, \ov z) = (x^2 + y^2)^N + \text{lower order terms},
\]
$N$ being the order of $\Om$. All these matters are discussed in Section $2$.

\medskip

The next natural step is to understand the possible singularities that $\pa \Om$ possesses. This was done by Sakai \cite{S1} who considered the following local question: take an arbitrary open set $D$ in the unit disc $B(0, 1)$ such that the origin is contained in $\pa D$ and let $\Ga = \pa D \cap B(0, 1)$ and suppose that $D$ admits a Schwarz function in the sense that there is a continuous function $S$ defined on $D \cup \Ga$ such that $S$ is holomorphic on $D$ and $S(z) = \ov z$ on $\Ga$. If the origin is not an isolated point of $\Ga$, what can be said about the structure of $\Ga$? Sakai's theorem gives a complete classification of the various possibilities for $\Ga$ and the main point of this analysis is a detailed study of the  holomorphic function $z S(z)$ (which equals $z \ov z = \vert z \vert^2$ on $\pa \Om$). One of the main difficulties is to show that
\begin{equation}
\lim_{z \in D, z \rightarrow 0} \vert S(z) / z \vert = 1.
\end{equation}
Working with the graph of $S$ allows all this to be interpreted more geometrically as follows. First, $zS(z)$ is exactly the restriction of the quadratic projection $(z, w) \mapsto z^2 + w^2$ to the graph of $S$ after a $\mathbb C$-linear change of variables and it is well known that quadratic projections from analytic sets provide finer details about their boundary behaviour near totally real obstructions than linear ones. This clarifies the choice of $z S(z)$. Second, the limiting ratio of $\vert S(z)/ z\vert$ is in fact a unit vector in the (complex) tangent cone at the origin to the analytic set that extends the graph of $S$. The tangent cone is a finite union of complex lines, each of which intersects $M$ in either a point or a real line. The assumption that the origin is a non-isolated point in $\Ga$ forces the tangent cone to consist of only those complex lines that intersect $M$ in a real line. Such complex lines were called semi-real in Chirka's comprehensive study of the boundary regularity of $1$-dimensional analytic sets near totally real obstructions -- see \cite{Ch}. That the  limit in (1.2) exists and equals $1$ is a consequence of the fact that the tangent cone consists entirely of semi-real lines. These two remarks set the tone for an account of Sakai's theorem in the framework of local analytic geometry and the discerning reader will have no difficulty in recognizing the influence of some of the basic ideas in \cite{Ch} in what is presented here. However, the full force of these techniques will not be needed. We give a complete proof of Sakai's theorem and also mention some simplifications in Sections $3, 4$ and $5$.

\section{Quadrature domains have algebraic boundaries}

\noindent Using a pole elimination technique, Aharanov-Shapiro \cite{AS} proved the following theorem, which we state in their own words:

\medskip

{\sf Let $D$ be a bounded plane domain. Suppose $f$ and $g$ are holomorphic in $D$ except for finitely many polar singularities. Suppose moreover that $f$ and $g$ are continuously extendible to $\pa D$ and there take only real values.Then there is a non-trivial polynomial $P(X, Y)$ such that $P(f(z), g(z)) = 0$. Moreover, $P$ can be taken to have real coefficients, and be irreducible over the complex field.}

\medskip

\begin{proof}
By regarding $f, g$ as holomorphic maps to $\mbb P^1$, it follows that
\[
\phi(z) = \left( \frac{f(z) - i}{f(z) + i}, \frac{g(z) - i}{g(z) + i}  \right)
\]
defines a proper holomorphic mapping from $D$ into $(\mbb P^1 \times \mbb P^1) \sm \mbb T^2$, where $\mbb T^2 \subset \mbb R^2$ is the standard $2$-torus. Indeed, since $f, g$ are both real valued on $\pa D$, the map $\phi$ extends continuously to $\pa D$ and hence for any sequence $\{p_j\}$ that clusters only at $\pa D$, each component of $\phi(p_j)$ has unimodular limiting values, which exactly means that $\phi(p_j)$ clusters at a point on $\mbb T^2$. This implies that $\phi$ is proper. By Remmert's theorem, $A = \phi(D)$ is an irreducible one dimensional analytic set in $(\mbb P^1 \times \mbb P^1) \sm \mbb T^2$. Now form the reflection $A^{\ast}$ of $A$. To do this, note that reflection in $\mbb T^2$ is given by the involution $\sigma(z_1, z_2) = (1/\ov z_1, 1/\ov z_2)$ each of whose components defines the standard inversion in the unit circle. Evidently, $\sigma$ extends to a conjugate holomorphic involution on $\mbb P^1 \times \mbb P^1$ and hence defines a real structure on it. Thus, as a set $A^{\ast} = \{\sigma(a) : a \in A\}$ and moreover, if in a neighbourhood $U$ of one of its points $a$, $A$ is defined by the holomorphic functions $\{f_j\}$, then in $U^{\ast} = \sigma(U)$ (which is a neighbourhood of $a^{\ast} = \sigma(a)$), $A^{\ast}$ is defined by the holomorphic functions $\{ \ov{f_j \circ \sigma(z_1, z_2)} =  \ov{f(1/\ov z_1, 1/\ov z_2)}\}$. Then $A \cup A^{\ast}$ is an irreducible analytic set in $(\mbb P^1 \times \mbb P^1) \sm \mbb T^2$. By the Alexander--Becker theorem, $\mathcal A = \ov{A \cup A^{\ast}}$ is an analytic set in $\mbb P^1 \times \mbb P^1$. Recall that the Segre embedding $s : \mbb P^1 \times \mbb P^1 \ra \mbb P^3$ defined by
\[
s([x_0 : x_1], [y_0 : y_1]) = [z_0 : z_1 : z_2 : z_3]
\]
where $z_0 = x_0 y_0, z_1 = x_0 y_1, z_2 = x_1 y_0, z_3 = x_1 y_1$ exhibits $\mbb P^1 \times \mbb P^1$ as a smooth quadric in $\mbb P^3$ given by
\[
z_0 z_3 - z_1 z_2 = 0.
\]
Thus $s(\mathcal A) \subset \mbb P^3$ is analytic and hence algebraic. Any homogeneous polynomial $Q = Q(z_0, z_1, z_2, z_3)$ of degree $k$ that vanishes on $s(\mathcal A)$ must have the form
\[
Q = \sum_{\alpha, \beta, \gamma, \delta \geq 0} c_{\alpha \beta \gamma \delta} z_0^{\alpha} z_1^{\beta} z_2^{\gamma} z_3^{\delta}
\]
where $\alpha + \beta + \gamma + \delta = k$. Rewriting this in terms of the $x_i$'s and $y_i$'s, the polynomial has the form
\[
Q = \sum_{\alpha, \beta, \gamma, \delta \geq 0} c_{\alpha \beta \gamma \delta} x_0^{\alpha + \beta} x_1^{\gamma + \delta} y_0^{\alpha + \gamma} y_1^{\beta + \delta}
\]
which shows that the pullback $Q \circ s$ is bihomogeneous, i.e., homogeneous in the $x_i$'s and $y_i$'s separately.  Since $\cal A \subset \mbb P^1 \times \mbb P^1$ is purely one dimensional, it must be principal and hence there is an irreducible bihomogeneous polynomial $P = P(x_0, x_1, y_0, y_1)$ that defines it and in particular vanishes on $A = \phi(D)$. If $X = x_0/x_1$ and $Y = y_0/y_1$ are the affine coordinates on the two copies of $\mbb P^1 \sm \{ \infty\}$, it follows that $P(X, 1, Y, 1) \in \mbb R[X, Y]$ vanishes on $A \cap \mbb C^2$ and hence there is a polynomial (which we still denote by $P$) such that
\begin{equation}
P(f(z), g(z)) = 0
\end{equation}
 for $z \in \ov \Om$ away from the poles of $f, g$. In effect, this last step amounts to undoing the linear fractional transformation in each component of $\phi$ and working essentially with $z \mapsto (f(z), g(z))$ which takes values in $\mbb R^2$ on $\pa D$. To complete the proof, note that $\cal A$ is invariant under $\sigma$ (by construction) and hence the polynomial
 $P$ from (2.1) must be invariant under the involution that fixes $\mbb R^2 \subset \mbb C^2$, i.e., $(z, w) \mapsto (\ov z, \ov w)$. Hence
\[
\ov{P(X, Y)} = {P\left(\ov X, \ov Y\right)}
\]
which implies that $P$ has real coefficients. The same arguments apply in the following two cases: (i) $f, g$ have finitely many polar singularities on $\pa D$, i.e., points $\zeta \in \pa D$ such that $\vert f(z) \vert$ or $\vert g(z) \vert \ra +\infty$ as $z \in \ov D \sm \{\zeta\}$ approaches $\zeta$ and (ii) $D$ is unbounded. These observations were also mentioned by them.

\end{proof}

\noindent The conclusion that quadrature domains have algebraic boundaries is now clear. Take the  Schwarz function $S(z)$ corresponding to $\Om$ and apply the Aharanov--Shapiro theorem to
\[
f(z) = (z + S(z))/2, \;g(z) = (z - S(z))/2i
\]
both of which are meromorphic in $\Om$ with finitely many poles and extend continuously up to $\pa \Om$. Since $S(z) = \ov z$ on $\pa \Om$ it follows that $f(z) =x$ and $g(z) = y$ there and hence
\[
\pa \Om \subset \{(x, y) : P(x, y) = 0\}.
\]
To obtain further properties of this polynomial, it is best to apply the aforementioned steps to the given quadrature domain $\Om$ from the very beginning, in which case they take on a particularly transparent form. To do this, first note that the complex linear change of coordinates
\begin{equation}
T(z, w) = (T_1(z, w), T_2(z, w)) = (z/2 + w/2, -iz/2 + iw/2) = (Z, W)
\end{equation}
transforms the totally real manifold $M$ into $\{ (Z, W) : \Im Z = \Im W = 0 \} \backsimeq \mbb R^2$. The involution $\tau(z, w) = (\ov z, \ov w)$ is the conjugate holomorphic reflection in $\mbb R^2$ and hence
\[
T^{-1} \circ \tau \circ T(z, w) = (\ov w, \ov z)
\]
defines the conjugate holomorphic reflection in $M$. Let $A = \text{Graph}(S) = \{ (z, S(z)) : z \in \Om\}$ which is an irreducible $1$-dimensional analytic set with boundary in $M$. The reflection $A^{\ast}$ of $A$ consists of points of the form $\{ (\ov{S(z)}, \ov z) : z \in \Om \}$ and this is also analytic. Indeed, $A$ being the graph of the Schwarz function $S$ is defined by
\[
\phi(z, w) = w -  S(z) = 0
\]
and hence (by definition) $A^{\ast}$ is the zero locus of the holomorphic function
\[
\ov{\phi (T^{-1} \circ \tau \circ T(z, w) )} = \ov{\phi(\ov w, \ov z)} = z - \ov{S(\ov w)}.
\]
Hence $A \cup A^{\ast} \subset (\mbb P^1 \times \mbb P^1) \sm M$ is a one dimensional analytic set and therefore the Alexander--Becker theorem shows that $\cal A = \ov{A \cup A^{\ast}}$ is an irreducible analytic set in $\mbb P^1 \times \mbb P^1$ which is defined by an irreducible bihomogeneous polynomial, say $P(x_0, x_1, y_0, y_1)$. Here, as before, the two copies of $\mbb P^1$ are assumed to have projective coordinates $[x_0:x_1]$ and $[y_0:y_1]$ respectively. In affine coordinates $z = x_0/x_1, w = y_0/y_1$, $\cal A \cap \mbb C^2$ is defined by $P(z, 1, w, 1)$ which we will write as $P(z, w)$. Since $\cal A$ is invariant under reflection in $M$, i.e., $T^{-1} \circ \tau \circ T$, the same must hold for $P(z, w)$. Hence
\begin{equation}
P(z, w) = \ov{P(\ov w, \ov z)}
\end{equation}
which shows that if  $P(z, w) = \sum c_{\alpha  \beta} z^{\alpha} w^{\beta}$ then $c_{\alpha \beta} = \ov c_{\beta \alpha}$. Let $\pi_1, \pi_2$ be the projections from $\cal A$ to the factors $\mbb P^1$ (with coordinates $[x_0:x_1]$) and $\mbb P^1$ (with coordinates $[y_0:y_1]$) respectively, as shown:

\begin{center}
\begin{tikzpicture}[every node/.style={midway}]
\matrix[column sep={4em,between origins},
        row sep={2em}] at (0,0)
{ \node(R)   {$\mathcal A$}  ; & \node(S) {$\mathbb P^1$}; \\
  \node(R/I) {$\mathbb P^1$};                   \\};
\draw[<-] (R/I) -- (R) node[anchor=east]  {$\pi_1$};
\draw[->] (R/I) -- (S) node[anchor=north]  {$\psi$};
\draw[->] (R)   -- (S) node[anchor=south] {$\pi_2$};
\end{tikzpicture}
\end{center}

\noindent Both are evidently proper and hence by Remmert's theorem, $\pi_1(\cal A)$ and $\pi_2(\cal A)$ are analytic sets in the respective copies of  $\mbb P^1$. Furthermore, none of the projections collapse $\cal A$ to a point because both $\pi_1(\cal A), \pi_2(\cal A)$ contain $\Om$ by construction. Hence both $\pi_1$ and $\pi_2$ are surjective which then implies that $\cal A$ is a finite branched cover over both factors, i.e., $\cal A \subset \mbb P^1 \times \mbb P^1$ is a holomorphic correspondence. Now note that the fiber $\pi_1^{-1}\left([0:1]\right)$ does not intersect $A$ (since it is the graph of $S$ over a bounded domain $\Om$) and hence
\[
\cal A \cap \pi_1^{-1}\left([0:1]\right) = \ov {A^{\ast}} \cap \pi_1^{-1}\left([0:1]\right)
\]
where both are finite sets. By the definition of $A^{\ast}$, it follows that $\ov {A^{\ast}} \cap \pi_1^{-1}\left([0:1]\right)$ as a set consists precisely of the conjugates of the nodes  $q_1, q_2, \ldots, q_m$ and hence has cardinality $N = N(\Om)$, the order of $\Om$, when counted with multiplicity. It can be checked that $\pi_2$ has exactly the same properties. Thus the projection multiplicity of $\cal A$ as a branched cover over each factor of $\mbb P^1$ is $N$. It follows that the degree of $P(z, w)$ in both $z$ and $w$ is $N$. This has two consequences:

\medskip

For the first one, let
\[
P(z, w) = w^N P_0(z) + w^{N-1}P_1(z) + \ldots + P_N(z)
\]
where the coefficients $P_i(z)$ are polynomials of degree no more than $N$. Therefore
\[
P(z, S(z)) = (S(z))^N P_0(z) + (S(z))^{N-1} P_1(z) + \ldots + P_N(z) = 0
\]
for all $z \in \Om$ which can be rewritten as
\[
P_0(z) = -  P_1(z)/S(z) -  \ldots -  P_N(z)/(S(z))^N
\]
away from a finite set in $\Om$. The expression on the right vanishes at the nodes $q_1, q_2, \ldots, q_m$ with corresponding multiplicities $n_1, n_2, \ldots, n_m$ since the polynomials $P_i(z)$ ($1 \le i \le N$) are all bounded on $\Om$ and the Schwarz function $S(z)$ has poles precisely at the nodes. Hence the degree of $P_0(z)$ is $n_1 + n_2 + \ldots + n_m = N$ which shows that
\[
P(z, w) = c_{NN} z^N w^N + \text{terms of lower order}.
\]
The coefficient $c_{NN}$ is real because of (2.3). The boundary $\pa \Om$ is contained in $\{P(z, \ov z) = 0\}$ and therefore
\[
P(z, \ov z) = c_{NN} (x^2 + y^2)^N + \text{terms of lower order},
\]
where $z = x+ iy$. This was first obtained by Gustafsson (see \cite{AS} and \cite{Gu} for further information) and a corollary is the the well known result that the ellipsoidal region
\[
\left\{ (x, y) \in \mbb R^2 : x^2/a^2 + y^2/b^2 < 1  \right\}
\]
 (where $a \not= b$) is not a quadrature domain.

\medskip

The second consequence relates to the valence of the Schwarz function in $\Om$, i.e., for a given $w \in \mbb C$, how many zeros, counting multiplicities, does $S(z) - \ov w$ have in $\Om$?
When $w \in \mbb C \sm \ov \Om$, the fiber $\pi_1^{-1}(w)$ does not intersect $A$ as explained above.  But $\pi_1$ has multiplicity $N$, which means that all points in $\cal A$ that lie over $w$ must come from $A^{\ast}$. By the definition of $A^{\ast}$, this implies that $S(z) - \ov w$ has $N$ roots in $\Om$. If $w \in \Om$, then exactly one point in the fiber $\pi_1^{-1}(w)$ lies in $A$, namely $S(w)$. Therefore the remaining $N-1$ points must belong to $A^{\ast}$ and hence $S(z) - \ov w$ has $N-1$ roots in $\Om$. These were first obtained by Avci and different proofs were provided by Shapiro -- see \cite{Sh1}, \cite{Sh2} for a leisurely account. The remaining possibility is when $w \in \pa \Om$. This will require additional information about the possible singularities of $\pa \Om$ and we will return to this question then.


\section{Regularity of the boundary: Sakai's theorem}

\noindent The boundary of a quadrature domain cannot have singularities with an arbitrary prescribed complexity. In fact, Sakai's theorem gives a complete classification of the possible singularities that can occur. Loosely speaking, apart from a degenerate situation, there are only two types of singular points -- a double point that is formed by the tangency between two smooth real anaytic branches and an inward pointing cusp, and both actually occur as examples show. More precisely, let $D \subset B(0, 1)$ be an arbitrary open set such that the origin lies on $\pa D$. Let $\Ga = \pa D \cap B(0, 1)$ and assume that the origin is not an isolated point of $\Ga$. Suppose that there is a continuous function $S$ on $D \cup \Ga$ that is holomorphic on $D$ and equals $\ov z$ on $\Ga$. Sakai's theorem, which is stated below in essentially his own terms, in fact applies to this local situation and classifies the possible structure of $\Ga$.

\medskip

\noindent {\sf One of the following must occur for a small disc $B(0, \delta)$:

\begin{enumerate}

\item[(1)] $D \cap B(0, \delta)$ is simply connected and $\Ga \cap B(0, \delta)$ is a regular real analytic simple arc passing through $0$.

\medskip

\item[(2a)] $\Ga \cap B(0, \delta)$ determines uniquely a regular real analytic simple arc passing through $0$ and $\Ga \cap B(0, \de)$ is an infinite proper subset of the arc that accumulates at $0$ or is the whole arc. In this case $D \cap B(0, \delta)$ is equal to $B(0, \delta) \sm \Ga$.

\medskip

\item[(2b)] $D \cap B(0, \delta)$ consists of two simple connected components $D^+$ and $D^-$. Furthermore, each of $\pa D^{\pm} \cap B(0, \delta)$ are distinct regular real analytic simple arcs passing through $0$ and they are tangent to each other to $0$.

\medskip

\item[(3)] $D \cap B(0, \delta)$ is simply connected and $\Ga \cap B(0, \delta)$ is a regular real analytic simple arc except for a cusp at $0$ that points inwards into $D \cap B(0, \delta)$. It is a very special one. There is a holomorphic function $T$ defined on the closure of $B(0, \delta)$ such that $T$ has the following properties: it has a zero of order two at $0$, it is injective on the closure of the upper half disc in $B(0, \delta)$ (namely $H^+ = \{ \tau \in B(0, \delta) : \Im \tau > 0 \}$), and finally $T(\ov H^+) \subset D \cup \Ga$ along with $\Ga \cap B(0, \delta) \subset T\left( (-\ep, \ep) \right)$.

\end{enumerate}
}

To rephrase $(3)$ -- the map $T : \ov H^+ \rightarrow D \cup \Ga$ is biholomorphic as a map from $H^+$ onto $D$, it extends holomorphically across $\pa H^+$ and maps the open  interval $(-\ep, \ep)$ (which is a subset of $\pa H^+$) onto $\Ga$ near the origin. The application to quadrature domains follows by taking $S$ to be the corresponding Schwarz function. The main step in Sakai's proof is (1.2), which is done in two parts, namely
\[
\limsup_{z \in D, z \rightarrow 0} \vert S(z) / z \vert \leq 1 \;\; \text{and} \;\; \liminf_{z \in D, z \rightarrow 0} \vert S(z) / z \vert \geq 1.
\]
The first bound is a direct consequence of a theorem of Fuchs (see Sakai \cite{S1}) while the lower bound is more technically involved. In this section, we show how to formulate and reprove Sakai's theorem in more geometric terms. In particular, it is possible to give a different proof of the lower bound above.

\begin{prop}
In the situation described above,
\[
\lim_{z \in D, z \ra 0} \vert S(z)/z \vert = 1.
\]
\end{prop}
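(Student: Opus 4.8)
The plan is to recover (1.2) from the local geometry of the graph of $S$ and from the tangent cone of the analytic set that extends it. First I would fix a small $\de>0$, set $A=\{(z,S(z)):z\in D\cap B(0,\de)\}$ (a pure $1$-dimensional analytic set), and observe that $\ov A\sm A\subset M$, where $M=\{(z,w):w=\ov z\}$: if $z_j\in D\cap B(0,\de)$ and $z_j\to z_0$, then $(z_j,S(z_j))$ converges to a point of $A$ when $z_0\in D$ and to $(z_0,\ov{z_0})\in M$ when $z_0\in\Ga$, by the continuity of $S$ on $D\cup\Ga$. The same holds for the reflection $A^{\ast}=\{(\ov{S(z)},\ov z):z\in D\cap B(0,\de)\}$. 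Since $M$ is maximally totally real, hence removable for $1$-dimensional analytic sets, the Alexander--Becker theorem in its local form (cf.\ \cite{Ch}) yields a pure $1$-dimensional analytic set $\widehat A=\ov{A\cup A^{\ast}}$ in a neighbourhood $U$ of the origin. I would record two consequences: $0\in\widehat A$, because the non-isolatedness of $0$ in $\Ga$ produces points $(z_j,\ov{z_j})\in\ov A\cap M$ tending to $0$; and $\widehat A\sm M=A\cup A^{\ast}$ on $U$, because every limit point of $A\cup A^{\ast}$ not already in $A\cup A^{\ast}$ lies in $M$. It is occasionally convenient to pass to the coordinates of (2.2), in which $M$ becomes $\mbb R^2$ and $zS(z)$ is the restriction to $A$ of the quadratic projection $(Z,W)\mapsto Z^2+W^2$.

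Next I would reduce the Proposition to the statement that the tangent cone $C_0\widehat A$, a finite union $\ell_1\cup\cdots\cup\ell_k$ of complex lines through the origin, consists only of \emph{semi-real} lines, where $\ell$ through $0$ is called semi-real if $\ell\cap M$ is a real line. In the $(z,w)$ coordinates, $\ell=\{w=\ka z\}$ is semi-real exactly when $\abs\ka=1$, while the line $\{z=0\}$ and each $\{w=\ka z\}$ with $\abs\ka\neq1$ meets $M$ only at $0$. Granting that all $\ell_i$ are semi-real, let $z_n\in D$ with $z_n\to0$; then $(z_n,S(z_n))\in A\subset\widehat A$ tends to $0$ (as $S(z_n)\to S(0)=0$, since $S=\ov z$ on $\Ga$ and $S$ is continuous), so along a subsequence the directions $(z_n,S(z_n))/\abs{(z_n,S(z_n))}$ converge to a unit vector of some $\ell_i$. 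As $\ell_i\neq\{z=0\}$, we have $\ell_i=\{w=\ka_iz\}$ with $\abs{\ka_i}=1$, whence $S(z_n)/z_n\to\ka_i$ and $\abs{S(z_n)/z_n}\to1$ along the subsequence; since the sequence was arbitrary, the limit in the Proposition exists and equals $1$. (This also reproves the upper bound $\limsup\abs{S(z)/z}\le1$, classically a consequence of a theorem of Fuchs, together with the lower bound.)

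It remains to show that every branch $B$ of $\widehat A$ at $0$ has semi-real tangent line $\ell_B$, and here is where the hypothesis on $\Ga$ enters. If $\dim_0(B\cap M)=1$, then $B\cap M$ contains a real-analytic arc $s\mapsto\de(s)$ through $0$, with first non-vanishing term $\de(s)=s^mv+\cdots$, $v\neq0$. Since $\de(s)\in M$ for all small real $s$ and $M$ is an $\mbb R$-linear subspace, $v\in M$; since $\de$ maps into $B$, the direction $v$ is tangent to $B$, so $\ell_B=\mbb C\cdot v$, and a line spanned by a vector of $M$ is semi-real. If instead $\dim_0(B\cap M)=0$, i.e.\ $B\cap M=\{0\}$ near $0$, then $B\sm\{0\}\subset\widehat A\sm M=A\cup A^{\ast}$. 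Because $B$ is irreducible and not both $B\cap A$ and $B\cap A^{\ast}$ can be discrete, one of $B\cap A$, $B\cap A^{\ast}$ is dense in $B$, so $B\subset\ov A$ or $B\subset\ov{A^{\ast}}$; combined with $\ov A\sm M=A$ (and its analogue) and $B\cap M=\{0\}$, this gives $B\sm\{0\}\subset A$ or $B\sm\{0\}\subset A^{\ast}$. Applying the reflection of (2.2) if needed, assume $B\sm\{0\}\subset A$. Parametrising $B$ by a holomorphic map $\nu\mapsto(a(\nu),b(\nu))$ near $0$ with $a(0)=b(0)=0$, we get $b(\nu)=S(a(\nu))$ and $a(\nu)\in D$ for $\nu\neq0$; here $a$ is non-constant (otherwise $B$ would be a point), so by the open mapping theorem its image contains a disc $B(0,r)$, and then $B(0,r)\sm\{0\}\subset D$. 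Since $0\in\pa D$, this forces $\Ga\cap B(0,r)=\{0\}$, contradicting the assumption that $0$ is a non-isolated point of $\Ga$. Hence no branch of $\widehat A$ at $0$ can be of this second type, all $\ell_i$ are semi-real, and the Proposition follows.

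I expect the main obstacle to be the removability step of the first paragraph: it must furnish the analytic set $\widehat A$ with no a priori regularity of $\pa D$, and this input --- the Alexander--Becker theorem for $1$-dimensional analytic sets with boundary in a totally real manifold, in the circle of ideas of \cite{Ch} --- is what carries the argument; once it is in place, the remainder is comparatively mechanical. The only other place demanding care is the claim that a branch $B$ with $B\cap M=\{0\}$ must lie inside a single one of $A$, $A^{\ast}$: one has to account for the possibility that $A$ and $A^{\ast}$ overlap, which can even occur globally, as the disc shows.
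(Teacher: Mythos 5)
Your proof is correct and shares the paper's overall architecture --- graph of $S$, reflection $A^{\ast}$, Alexander--Becker, and the reduction of the limit to the statement that every line in the tangent cone at the origin is semi-real --- but the way you establish that last statement is genuinely different. The paper argues line by line in the cone: Fuchs' theorem (giving $\limsup |S(z)/z| \le 1$) excludes the vertical direction, and a complex-transversal tangent line is excluded by trapping the corresponding component of $A$ in a transversal cone, so that it clusters only at the origin, whence Remmert--Stein and the properness of $(z,w) \mapsto z$ force $0$ to be isolated in $\Ga$. You instead case on each local branch $B$ according to $\dim_0(B \cap M)$: if the intersection is one-dimensional, curve selection plus the $\mbb R$-linearity of $M$ place a nonzero vector of $M$ on the tangent line of $B$, which is therefore semi-real; if $B \cap M = \{0\}$, a Puiseux parametrization and the open mapping theorem applied to the first coordinate show that a punctured disc about $0$ lies in $D$, again isolating $0$ in $\Ga$. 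This buys two things: you never need Fuchs' theorem (your argument delivers the upper and lower bounds simultaneously, whereas the paper imports the upper one), and you replace the paper's closing computation with $\vert Q \vert / \vert T \vert^2$ by the direct observation that limiting directions of $(z, S(z))$ lie on semi-real lines. Two small repairs are needed but neither affects validity: the parenthetical ``otherwise $B$ would be a point'' is not the right reason that $a$ is non-constant --- if $a \equiv 0$ then $B$ is a vertical disc, but then $B \sm \{0\} \subset A$ would force $0 \in D$, which is the contradiction you actually want; and the dichotomy ``$\dim_0(B \cap M) = 0$ iff $B \cap M = \{0\}$ near $0$'' relies on the local finiteness of the connected components of the real-analytic set $B \cap M$ (the same semi-analytic stratification fact the paper invokes just after this proposition), which deserves explicit mention.
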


\begin{proof}
To start with, note that
\[
A = \text{Graph}(S) \sm M
\]
is a non-empty closed analytic set in $(B(0, 1) \times \mbb C) \sm M$ whose boundary $\pa A$ is contained in $M$. It is non-empty as otherwise $\text{Graph}(S)$ which is an analytic set will be entirely contained in the totally real manifold $M$ and this cannot happen. That $A$ is closed follows from the continuity of $S$. However, $A$ may not be irreducible since $D$ may have several components. Form the reflection $A^{\ast}$ as explained before. Then $A \cup A^{\ast} \subset (B(0, 1) \times \mbb C) \sm M$ is a closed analytic set which is invariant under reflection in $M$ and the Alexander--Becker theorem shows that $\cal A = \ov{A \cup A^{\ast}}$ is analytic in $B(0, 1) \times \mbb C$. Note that $(0, 0) \in \cal A$. Recall that the (complex) tangent cone to $\cal A$ at the point $(0, 0)$, which will be denoted by $C(\cal A, (0, 0))$, consists of all vectors $v  \in \mbb C^2$ that are expressible as
\[
v = \lim_{j \ra \infty} \la_j a_j
\]
where $a_j \in \cal A$ converges to $(0, 0)$ and $\la_j \in \mbb C$. Since $\cal A$ is analytic, $C(\cal A, (0, 0))$ is the union of finitely many complex lines through the origin $(0, 0)$. The relative position of these lines with respect to $M$ governs the behaviour of $\cal A$ (and hence $A$) near $M$. A complex line in $\mbb C^2$ through the origin, other than the $w$-axis, is of the form $\zeta \mapsto \zeta(1, m)$ for some $m \in \mbb C$. The former, which can be parametrized as $\zeta \mapsto \zeta (0, 1)$ intersects $M$ exactly at the origin. The intersection of the latter with $M$ depends on $m$. The case $m = 0$ corresponds to the $z$-axis which intersects $M$ exactly at the origin. Now suppose that $m \not= 0$ and that there is a nonzero $\la_0$ such that $(\la_0, \la_0 m) \in M$. Then
\[
\la_0 m = \ov \la_0
\]
and thus $\vert m \vert = 1$. Conversely, if $m = e^{-2i \alpha}$ for some $\alpha$, then $(\la_0, \la_0 m) \in M$ where $\la_0 = e^{i \alpha}$. Hence, complex lines with $\vert m \vert = 1$  are precisely the ones that intersect $M$ in a real line, which then turns out to be the real span of the vector $(\la_0, \la_0 m)$. These are the semi-real lines in Chirka's terminology. All other values of $m$ correspond to complex lines, that will be called complex transversals, which intersect $M$ precisely at the origin. The property of being complex transversal is clearly an open condition.

\medskip

Since $A \subset \cal A \subset B(0, 1) \times \mbb C$, there are finitely many irreducible components, say $A_1, A_2, \ldots, A_k$ of $A$ that cluster at the origin and in fact for each $A_j$, there is a unique irreducible component $\cal A_j \subset \cal A$ such that $A_j \subset \cal A_j$. There are of course finitely many irreducible components of $\cal A$ as well by its analyticity near the origin. Each $\cal A_j$ is tangent to a unique line in $C(\cal A, (0, 0))$ and hence so is $A_j$. Now note that for $\zeta \in \pa D$, $\zeta \not= 0$,
\[
\lim_{z \in D, z \ra \zeta} \vert S(z)/z \vert = \lim_{z \in D, z \ra \zeta} \vert \ov \zeta / \zeta \vert  = 1
\]
and by the continuity of $S$ on $\ov D$, it follows that for a fixed $\delta \in (0, 1)$,
\[
\vert S(z)/ z \vert \le C \vert z \vert^{-1}
\]
in $D \cap B(0, \delta)$. By Fuchs' theorem,
\[
\limsup_{z \in D, z \ra 0} \vert S(z)/z \vert \le 1.
\]
This means that $C(A, (0, 0))$, which consists of limiting positions of the complex lines through the origin in the direction of $(z, S(z))$, cannot contain the $w$-axis (the vertical direction). In fact, $C(A, (0, 0))$ does not contain any complex transversals. To see this, suppose that $\cal L$ is one such line. Then $\cal L = \la(1, m)$ where $\vert m \vert \not= 1$ and let $A_1 \subset A$ be the component which is tangent to it. By the definition of the tangent cone, there is a small conical neighbourhood (formed by taking the union of $\cal L$ with its neighbouring complex transversals) that contains $A_1$. Then $A_1$ clusters only at the origin and since it is one dimensional, $\ov A_1 = A_1 \cup \{(0, 0)\}$ is a closed analytic set by the Remmert-Stein theorem. Since $\cal L$ is not the $w$-axis, the projection $\pi(z, w) = z$ is locally proper near the origin when restricted to $\ov A_1$. That is, there is a neighbourhood $U = U_1 \times U_2$ of $(0, 0)$ such that
\[
\pi : \ov A_1 \cap U \ra U_1 \subset \mbb  C
\]
is proper and hence surjective. Moreover, $\pi(A_1) = U_1 \sm \{0\}$. But this implies that $0 \in \Ga$ is an isolated point, which is a contradiction. It follows that all lines in $C(A, (0, 0))$ are semi-real.

\medskip

Let $q(z, w) = z^2 + w^2$. Then, with $T$ as in (2.2),
\[
Q(z, w) = q \circ T(z, w) = T_1^2(z, w) + T_2^2(z, w) = zw
\]
and
\[
\vert T(z, w) \vert^2 = \vert T_1(z, w) \vert^2 + \vert T_2(z, w) \vert^2 = \left (\vert z \vert^2 + \vert w \vert^2 \right)/2 = \vert (z, w) \vert^2/2.
\]
On semi-real lines, which are parametrized as $\la \mapsto \la(\la_0, \la_0 m)$ with $\la_0, m$ as above, it follows that
\[
\vert Q(\la, \la_0, \la \la_0 m) = \vert \la^2 \la_0^2 m \vert = \vert \la \la_0 \vert^2
\]
and
\[
\vert T(\la \la_0, \la \la_0 m) \vert ^2= \left( \vert \la \la_0  \vert^2 + \vert \la \la_0 m \vert^2 \right)/2 = \vert \la \la_0 \vert^2
\]
since $\vert m \vert = 1$. That is, $\vert Q \vert = \vert T \vert^2$ on $C(A, (0, 0))$. Since $A$ is asymptotic to $C(A, (0, 0))$ near the origin, it is expected that
\begin{equation}
\lim_{(z, w) \in A, (z, w) \ra 0}  \frac{\vert Q(z, w) \vert}{\vert T(z, w) \vert^2} = 1.
\end{equation}
Indeed, note that
\[
\frac{\vert Q(z, w) \vert}{\vert T(z, w) \vert^2} = \frac{\vert Q(z, w) \vert}{\vert T_1(z, w) \vert^2 + \vert T_2(z, w) \vert^2} = \frac{2 \vert zw \vert}{\vert z \vert^2 + \vert w \vert^2}
\]
and
\[
Q \left( \frac{(z, w)}{\vert(z, w)\vert}\right) = Q \left( \frac{(z, w)}{\sqrt{\vert z \vert^2 + \vert w \vert^2}}\right) = \frac{zw}{\vert z \vert^2 + \vert w \vert^2}
\]
and hence
\[
\frac{\vert Q(z, w) \vert}{\vert T(z, w) \vert^2} = 2 Q \left( \frac{(z, w)}{\vert(z, w)\vert}\right)
\]
for all $(z, w) \in \mbb C^2$. For $(z, w) \in A$ such that $(z, w) \ra (0, 0)$, all the limiting values of the unit vectors $(z, w)/\vert(z, w) \vert$ lie in the set of unit vectors in $C(A, (0, 0))$. For such vectors $v$, $2 \vert Qv \vert = 2 \vert Tv \vert^2 = \vert v \vert^2 = 1$ where the first equality holds since $v$ is semi-real, the second holds for all $v$ by the definition of $T$ and finally, the third holds since $v$ is a unit vector. In the limit, we get
\[
\lim_{(z, w) \in A, (z, w) \ra 0}\frac{\vert Q(z, w) \vert}{\vert T(z, w) \vert^2} = \lim_{(z, w) \in A, (z, w) \ra 0} 2 Q \left( \frac{(z, w)}{\vert(z, w)\vert}\right) = 2Q(v) = 1.
\]
But points on $A$ are of the form $(z, S(z))$ for $z \in D$ and hence
\[
1 = \lim_{(z, w) \in A, (z, w) \ra 0}\frac{\vert Q(z, w) \vert}{\vert T(z, w) \vert^2} = \lim_{z \ra 0, z \in D}  \frac{2\vert z S(z) \vert}{\vert z \vert^2 + \vert S(z) \vert^2}
\]
which implies that $\vert S(z) / z \vert \ra 1$ as $z \ra 0$ in $D$.
\end{proof}

\medskip

Consider the projection $Q : \cal A \ra \mbb C$.  What is the restriction of $Q$ to $\cal A$? On $A$, it is exactly the holomorphic function $z \mapsto z S(z)$, while on $A^{\ast}$ (which consists of points of the form $(\ov{S(z)}, \ov z)$, it is $z \mapsto \ov{z S(z)}$ for $z \in D$. Then $Q^{-1}(0) \cap \cal A$ contains the origin as an isolated point as otherwise some component of $\cal A$ will be contained in the analytic set $\{zw = 0\}$ by the uniqueness theorem. This would then imply that the graph of $S(z)$ over some connected component of $D$ is either contained in the $w$-axis (i.e., it is vertical) or the $z$-axis (i.e., $S \equiv 0$ in that component). The former cannot happen and the latter forces $S$ to vanish on the boundary of this component. But then this violates the given fact that $S(z) = \ov z$ on $\pa D$. Hence, there is a neighbourhood $U$ of the origin $(0, 0)$ such that $Q : \cal A \cap U \ra Q(\cal A) \subset \mbb C$ is proper. We may assume that $Q(\cal A)$ is the unit disc $\{ \vert \eta \vert < 1\}$.

\medskip

Now $\cal A \cap M \cap U$ is a closed real analytic set in $M \cap U$ and in particular, it is semi-analytic. Therefore, it admits a  semi-analytic stratification as $\cal A \cap M \cap U = T_1 \cup T_0$ where $T_i$ ($i = 0, 1$) is a locally finite union of real analytic submanifolds of $M$ of dimension $i$. Furthermore, each component of $T_i$ is semi-analytic as a subset of $M \cap U$. By shrinking $U$, we may assume that  $\cal A \cap M \cap U$ has finitely many connected components. By refining this stratification, it is possible to assume that $(0, 0) \in T_0$ and hence each connected component of $\cal A \cap M \cap U$ is either a smooth real analytic arc or consists of finitely many smooth real analytic arcs ending at points in $T_0$. One such component contains the origin at which finitely many smooth real analytic arcs end. Let $\ga_1, \ga_2, \ldots, \ga_k$ be the arcs in $\cal A \cap M \cap U$ of which some will contain the origin as an end point. By shrinking $U$, we may assume that each of $\gamma_1, \gamma_2, \ldots, \gamma_k$ contain the origin as an end point.

\medskip

Let $\cal A \cap M \cap U = S_1 \cup S_2$ where
\[
S_1 = \{ a : \text{either} \; \ov A \;\text{or} \;\ov A^{\ast} \; \text{is an analytic set near} \; a \}
\]
and $S_2 = (\cal A \cap M \cap U) \sm S_1$. Then $S_1$ consists precisely of those points for which there is no need to adjoin $A^{\ast}$ (respectively $A$) to complete $A$ (respectively $A^{\ast}$) to obtain a closed analytic set. On the other hand, $S_2$ consists of those points near which the presence of $A^{\ast}$ (respectively $A$) is essential to complete $A$ (respectively $A^{\ast}$) to obtain a closed analytic set. In other words, near points in $S_2$, neither $A$ nor $A^{\ast}$ have analytic closures taken by themselves. Now suppose that $S_1 \cap \ga_1 \not= \emptyset$. Then by \cite{Ch2} (see Theorem 1 in Section 18.2 of Chapter 4 therein), it follows that $\ov A$ will be analytic near every point of $\ga_1$ and hence $\ga_1 \subset S_1$. In other words, $S_1$ contains the entire arc the moment it contains a point on it and therefore the set of arcs $\{\ga_l\}$ can be divided into two sets -- those that are contained in $S_1$ and those which do not intersect $S_1$. It is safe to ignore the former set of arcs since $\ov A$ (or $A^{\ast}$) is already analytic near them. It is the latter that needs to be analyzed. This is the collection of those arcs that are contained in $S_2 \subset M$. On $M$, the projection $Q$ is real valued and non-negative -- indeed, it is $z \mapsto z \ov z = \vert z \vert^2 \geq 0$. Therefore, the images of the arcs in $S_2$ are finitely many relatively closed connected subsets of the half-line $\{ \Re \eta \geq 0\}$, all of which contain the origin as a boundary point.

\medskip

Sakai's theorem gives a description of the possible configurations of $D$ (and hence $\Gamma$) and for this reason, it will suffice to restrict $Q$ to $\ov A \cap U$. Note that $Q : A \cap U \ra \mbb C$ is nonconstant and defines a proper projection onto its image $Q(A \cap U) \subset \{\vert \eta \vert < 1\} \sm \{ \Re \eta \geq 0\}$.

\begin{prop}
Let $\gamma$ be an arc in $S_2$ and suppose that $p \in \gamma \cap \ov A$. Then $\gamma \subset \ov A$.
\end{prop}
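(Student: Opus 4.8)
The plan is to show that $\gamma\cap\ov A$ is simultaneously closed and open in the connected arc $\gamma$; since it contains $p$ it will then be all of $\gamma$. Closedness is immediate because $\ov A$ is a closed set. So the whole content is the \emph{openness}: fixing a point $q\in\gamma\cap\ov A$, I must produce a neighbourhood $V$ of $q$ with $\gamma\cap V\subset\ov A$.

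For this I would analyse $\cal A$ near $q$. Decompose $\cal A\cap V=\cal B_1\cup\dots\cup\cal B_r$ into irreducible components, and use that the reflection $\rho=T^{-1}\circ\tau\circ T$ in $M$ fixes $M$ pointwise, preserves $\cal A$, and satisfies $\rho(A)=A^{\ast}$; in particular $\rho(q)=q$ and $\rho$ permutes the $\cal B_j$. The key local statement I would record is a dichotomy for each $\cal B_j$ through $q$. If $\cal B_j$ is not $\rho$-invariant, then any point of $\cal B_j\cap M$ is fixed by $\rho$, hence also lies on the distinct component $\rho(\cal B_j)$, so $\cal B_j\cap M=\{q\}$. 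If $\cal B_j$ is $\rho$-invariant, then $\rho$ restricts to an antiholomorphic involution of $\cal B_j$; passing to the normalization $\nu:\mbb D\ra\cal B_j$, the lift $\widehat\rho$ is conjugate to $\zeta\mapsto\ov\zeta$, so $\nu^{-1}(M)$ is a diameter, $\cal B_j\cap M=\nu((-1,1))$ is a real-analytic arc through $q$, and $\cal B_j\sm M$ is the disjoint union of two connected half-discs $\cal B_j^+=\nu(\mbb D^+)$ and $\cal B_j^-=\nu(\mbb D^-)$, interchanged by $\rho$, each having $\cal B_j\cap M$ in its boundary.

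The next step is to show that each half-disc $\cal B_j^{\pm}$ lies entirely in $\ov A$ or entirely in $\ov{A^{\ast}}$. Since $\cal B_j^+\subset\cal A\sm M=A\cup A^{\ast}$, and since $A=\mathrm{Graph}(S)\sm M$ is a smooth curve that is closed in $(B(0,1)\times\mbb C)\sm M$, the relative boundary of $A\cap\cal B_j^+$ inside $\cal B_j^+$ is contained in $A\cap A^{\ast}$, which is discrete on $\cal B_j^+$ (distinct irreducible components of $\cal A$ meet only at $q$, and $q\notin\cal B_j^+$). Connectedness of $\cal B_j^+$ then forces $A\cap\cal B_j^+$ to fill $\cal B_j^+$ off a discrete set, or to be contained in one; that is, $\cal B_j^+\subset\ov A$ or $\cal B_j^+\subset\ov{A^{\ast}}$. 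In the first case $\cal B_j\cap M\subset\ov{\cal B_j^+}\subset\ov A$; in the second, $\cal B_j^-=\rho(\cal B_j^+)\subset\rho(\ov{A^{\ast}})=\ov A$, and again $\cal B_j\cap M\subset\ov{\cal B_j^-}\subset\ov A$. Thus every one-dimensional piece of $\cal A\cap M$ issuing from $q$ lies in $\ov A$. Since $\cal A\cap M\cap V$ consists, away from $q$, exactly of the real arcs $\cal B_j\cap M$ coming from the $\rho$-invariant components, and since $\gamma$ is a real-analytic arc through $q$ contained in $\cal A\cap M$, a neighbourhood of $q$ in $\gamma$ is covered by these arcs, hence lies in $\ov A$. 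This establishes openness and with it the proposition.

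The step I expect to be the main obstacle is the half-disc dichotomy — excluding that $\cal B_j^+$ oscillates between $A$ and $A^{\ast}$ — together with the degenerate possibility that $A$ and $A^{\ast}$ share an irreducible component near $q$ (in which case that component, and in particular its trace on $M$, already lies in $\ov A$, so the conclusion still holds). It is precisely here that the hypothesis $\gamma\subset S_2$ is convenient: a point of $S_2$ is by definition one near which neither $\ov A$ nor $\ov{A^{\ast}}$ is analytic, which prevents $A$ and $A^{\ast}$ from coinciding near $q$ and keeps $A\cap A^{\ast}$ discrete along the components that matter. The normalization bookkeeping and the point-set argument underlying the dichotomy are routine.
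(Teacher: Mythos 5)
Your proof is correct, but it takes a genuinely different route from the paper's. The paper runs the same clopen scheme on $\gamma$, but at a point $a\in\gamma\cap\ov A$ it exploits the quadratic projection $Q(z,w)=zw$: having arranged beforehand that $Q$ is unbranched and $\cal A$ is smooth off the origin, $Q$ maps $\cal A\cap U_a$ biholomorphically onto a disc $B(p,r)$ which the positive real axis (the image of $M\cap\cal A$) splits into two halves; a branch of $Q^{-1}$ landing in $A$ is continued over one half and Schwarz-reflected across the real segment $I$, and since the reflection must agree with the global inverse, one of the two halves $Q^{-1}(B^{\pm}(p,r))$ lies in $A$, so $\gamma=Q^{-1}(I)\subset\ov A$ near $a$. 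You instead argue intrinsically via the anti-involution $\rho$ and the normalizations of the local branches of $\cal A$ at $q$: the branches contributing arcs to $\cal A\cap M$ are exactly the $\rho$-invariant ones, each a disc split by a diameter into two halves swapped by $\rho$, and a connectedness argument places each half in $\ov A$ or $\ov{A^{\ast}}$, forcing the diameter into $\ov A$ either way. Your version dispenses with $Q$ and works verbatim at singular points of $\cal A$, at the cost of the normalization bookkeeping; the paper's is shorter because the relevant properties of $Q$ were secured earlier and the reflection is the classical one-variable Schwarz reflection. One justification should be repaired: the discreteness of $A\cap A^{\ast}$ on $\cal B_j^{+}$ does \emph{not} follow from ``distinct irreducible components of $\cal A$ meet only at $q$'' --- $A$ and $A^{\ast}$ are not components of $\cal A$, and over $S_2$ a single branch $\cal B_j$ necessarily carries pieces of both. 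The clean argument is that $A\cap\cal B_j^{+}$ and $A^{\ast}\cap\cal B_j^{+}$ are closed analytic subsets of $\cal B_j^{+}$, whose normalization is a connected half-disc, so each is either discrete or all of $\cal B_j^{+}$; this also absorbs the ``shared component'' degeneracy you defer to the end.
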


\begin{proof}
As noted earlier,
\[
Q : \cal A \cap U \ra \{ \eta : \vert\eta \vert < 1\} \subset \mathbb C
\]
is proper and hence has maximal generic rank. The singular locus of $\cal A$, {\text sng}($\cal A$) is zero-dimensional and hence by shrinking $U$ if needed, we can assume that the origin is possibly the only singularity of $\cal A$ in $U$. Since $Q$ is proper, the branch locus of $Q|_{\cal A \cap U}$, {\text br}($Q$) is discrete in $\cal A \sm \{(0, 0)\}$. Therefore, {\text br}$(Q) \;\cup$ {\text sng}($\cal A$) which is analytic, must be zero-dimensional. Thus $Q^{\prime} \not= 0$ on $\cal A \sm \{(0, 0)\}$. For the given arc $\gamma$, $\ov A \cap \gamma$ is clearly nonempty and closed in $\gamma$. It will suffice to show that $\ov A \cap \gamma$ is relatively open in $\gamma$. To do this, pick $a \in \ov A \cap \gamma$ and let $Q(a) = p > 0$. Let $U_a$ and $B(p, r)$ be small neighbourhoods of $a, p$ respectively such that
\begin{equation}
Q : \cal A \cap U_a \ra B(p, r)
\end{equation}
is a biholomorphism. Since $A \cap U_a \not= \emptyset$, the map $Q : A \cap U_a \ra B(p, r)$ is well defined. The positive $\Re \eta$-axis divides $B(p, r)$ into two components, say $B^{\pm}(p, r)$ which are defined by requiring $\pm \Im \eta > 0$ respectively. Suppose that $Q(A \cap U_a) \cap B^-(p, r) \not= \emptyset$. Let $q \in Q(A \cap U_a) \cap B^-(p, r)$ and let $Q_q^{-1}$ be a germ of the branched multivalued map $Q^{-1} : \{ \eta : \vert \eta \vert < 1\} \ra \cal A \cap U$ near $q$ such that $Q_q^{-1}(q) \in A \cap U_a$. Since (3.2) is a biholomorphism, $Q_q^{-1}$ can be analytically continued everywhere in $B^-(p, r)$; the analytic continuation will still be denoted by $Q_q^{-1}$. It follows that $Q_q^{-1}(B^-(p, r)) \subset A \cap U_a$. Also, the cluster set of $I = B(p, r) \cap \{\Im \eta = 0\}$ (which is the dividing diameter between $B^{\pm}(p, r)$) under $Q_q^{-1}$ is contained in $\gamma \cap U_a$. Since $\cal A$ and $\gamma$ are both smooth real-analytic near $a$, the Schwarz reflection principle shows that $Q_q^{-1}$ can be extended holomorphically across $I$. This extension agrees with the biholomorphism $Q^{-1} : B(p, r) \ra \cal A \cap U_a$. Thus, $\gamma = Q^{-1}(I)$ and since $Q_p^{-1}(B^-(p, r)) \subset A \cap U_a$, it follows that $\ov A \cap \gamma$ contains a relatively open arc (in $\gamma$) around $a$. This completes the proof.

\end{proof}

This shows that $\ov A \cap M$ consists of finitely many smooth real arcs that contain the origin as an end point. In particular, this rules out the possibility that $\ov A \cap M$ consists of countably many continua that cluster at the origin.

\medskip

Consequently, $Q(\pa A \cap S_2)$ consists of finitely many closed connected subsets of $\{\Re \eta \geq 0\}$ and therefore by shrinking $U$, there exists $\eta_0 > 0$ such that
\[
Q : A \cap U \ra \{\vert \eta \vert < \eta_0\} \sm \{\Re \eta \geq 0\}
\]
is proper. Let $\nu$ be the projection multiplicity of $Q|_{A \cap U}$.

\begin{prop}
$\nu = 1$ or $2$.
\end{prop}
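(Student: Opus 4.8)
The plan is to realise the connected components of $A\cap U$ as ``prongs'' of $D$ issuing from the origin, to show that the function $zS(z)$ forces each prong to open at angle exactly $\pi$, and then to observe that the origin cannot carry more than two disjoint half-planes.

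Write $W=\{|\eta|<\eta_0\}\setminus\{\Re\eta\ge 0\}$ for the slit disc onto which $Q|_{A\cap U}$ is proper. Since $Q'\neq 0$ on $\mathcal A\setminus\{(0,0)\}$ and $(0,0)\notin A$, the proper map $Q:A\cap U\to W$ has no critical points, so it is a proper local biholomorphism onto the connected, simply connected set $W$, hence a trivial finite covering. Therefore $A\cap U=A_1\sqcup\cdots\sqcup A_\nu$, where the $A_i$ are the connected components (each clustering at the origin, after a final shrinking of $U$), each carried biholomorphically onto $W$ by $Q$. In particular $\nu\ge 1$, every $\overline{A_i}$ contains $(0,0)$, and $\overline{A_i}\cap M$ consists near $(0,0)$ of two regular real analytic arcs — among those furnished by the earlier structure result for $\overline A\cap M$ — both issuing from $(0,0)$.

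Because $A=\mathrm{Graph}(S)$, the projection $\pi(z,w)=z$ is injective on $A$, so the sets $D_i:=\pi(A_i)$ are pairwise disjoint open subsets of $D$, each with $0\in\partial D_i$, and $\pi$ identifies $A_i$ with $D_i$ and $Q|_{A_i}$ with $\Phi(z):=zS(z)$. Thus $\Phi:D_i\to W$ is a biholomorphism, $\partial D_i$ agrees near $0$ with two regular real analytic arcs issuing from $0$ (with some opening angle $\alpha_i$ measured inside $D_i$), and on those arcs $\Phi=|z|^2\in[0,\eta_0)$ while $\Phi(z)\to 0$ as $z\to 0$ in $D_i$; hence $\Phi$ extends continuously to the corner, sending it to $0\in\partial W$ and the two arcs to the two edges of the slit. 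By Proposition 3.1, $|\Phi(z)|=|z|^2\,|S(z)/z|\asymp|z|^2$ as $z\to 0$ in $D_i$; in particular $\Phi$ is zero-free on $D_i$ near $0$, so (as $D_i$ is simply connected) a holomorphic branch $\psi=\Phi^{1/2}$ is defined there, and it maps $D_i$ biholomorphically onto the upper half disc $H=\{0<|\zeta|<\eta_0^{1/2},\ 0<\arg\zeta<\pi\}$, carrying the two boundary arcs into $\mathbb R$ and the corner to $0$, a smooth boundary point of $H$.

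Now compare the two descriptions of $\psi$ near $0$. On one hand $|\psi(z)|=|z|\,|S(z)/z|^{1/2}\asymp|z|$ there, again by Proposition 3.1. On the other hand $\psi$ is a conformal map of the domain $D_i$, which has an analytic corner of opening $\alpha_i$ at $0$, onto $H$, whose boundary is analytic of opening angle $\pi$ at $0$; by the classical behaviour of conformal maps at analytic corners, $\psi(z)$ is asymptotic to a nonzero multiple of $z^{\pi/\alpha_i}$, so $|\psi(z)|\asymp|z|^{\pi/\alpha_i}$. Comparing the two estimates forces $\pi/\alpha_i=1$, i.e. $\alpha_i=\pi$ for every $i$ (this rules out cusps and reflex openings alike). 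Hence each $D_i$ is asymptotically a half-plane at the origin, so for all small $r$ the set $D_i\cap\{|z|=r\}$ is a single circular arc whose angular length tends to $\pi$ as $r\to 0$; since the $D_i$ are disjoint these arcs are disjoint, so the sum of their angular lengths is at most $2\pi$, and letting $r\to 0$ gives $\nu\pi\le 2\pi$. Thus $1\le\nu\le 2$, as claimed. The one genuinely technical ingredient here is the corner estimate for $\psi$ (equivalently: the impossibility of mapping $D_i$ biholomorphically onto the slit disc with growth exactly $|z|^2$ unless $D_i$ opens at angle $\pi$); everything else is bookkeeping, and the final bound is just the fact that three disjoint half-planes cannot surround a single point.
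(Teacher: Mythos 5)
Your route is genuinely different from the paper's. The paper computes $\nu$ by the generalized argument principle, writes $\arg F = 2\arg z + \arg G$ with $G = S/z$, and uses $|G|\to 1$ to show that $\int_\eta^\rho A(r)\,dr/r$ stays bounded, which is incompatible with $\nu>2$; you instead split $A\cap U$ into $\nu$ sheets $A_i$, each carried biholomorphically onto the slit disc $W$ (proper, unbranched, simply connected target -- this part is fine, granting the paper's assertion that $Q(A\cap U)$ avoids $\{\Re\eta\ge 0\}$), and then try to show that each $D_i=\pi(A_i)$ must occupy angle $\pi$ at the origin. Both proofs are, at bottom, the same angle count, but yours runs through conformal mapping rather than through an integral identity.

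The gap is the corner step. At this stage of the paper the only structural information available is that $\overline A\cap M$ is a finite union of regular real analytic arcs ending at the origin (Proposition 3.2 and the discussion around it). It is not established that exactly two of these arcs bound a given $D_i$, that $D_i$ coincides near $0$ with one of the two sectors those arcs cut out (rather than, say, both), or -- crucially -- that the two arcs meet at a \emph{positive} angle. If they are tangent at $0$, the conformal map of $D_i$ onto the half disc is not asymptotic to any power of $z$ (it degenerates faster than every power), so the exponent comparison you use to ``rule out cusps and reflex openings alike'' simply does not apply in the one case that matters; excluding it requires the quantitative Warschawski estimates at a zero-angle corner, which you do not supply. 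Since corner-versus-cusp is essentially the dichotomy Sakai's theorem is meant to establish, assuming a clean analytic corner of opening $\alpha_i>0$ here is partly circular. (The final counting also tacitly assumes $D_i\cap\{|z|=r\}$ is a single arc, which again presupposes the corner picture.)

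The good news is that your idea survives without any boundary regularity. From the two facts you already have -- $\Phi: D_i\to W$ is a conformal equivalence and $|\Phi(z)|\asymp|z|^2$ on $D_i$ by Proposition 3.1 -- a length--area (extremal length) comparison does the job: with $\theta_i(t)$ the angular measure of $D_i\cap\{|z|=t\}$, the connecting family of $D_i\cap\{r<|z|<R\}$ has extremal length at least $\int_r^R dt/(t\,\theta_i(t))$, while conformal invariance and the inclusion $\Phi(D_i\cap\{r<|z|<R\})\subset W\cap\{cr^2<|\eta|<CR^2\}$ bound it above by $\pi^{-1}\log(R/r)+O(1)$; Cauchy--Schwarz then gives $\int_r^R\theta_i(t)\,dt/t\ge\pi\log(R/r)-O(1)$, and summing over $i$ against $\sum_i\theta_i(t)\le 2\pi$ yields $\nu\le 2$ as $r\to 0$. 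I would replace the corner asymptotics by this estimate; as written, the proposal has a genuine hole exactly where the degenerate boundary configurations would hide.
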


\begin{proof}
Note that $Q|_{A \cap U}$ is the map $F(z) = zS(z)$ which is well defined for $z \in D$ near the origin and extends continuously up to $\partial D$. As observed earlier, $Q^{-1}(0) \cap \mathcal A$ contains the origin as an isolated point. Thus $F(z) = 0$ has a unique root near the origin, namely $z = 0$. In particular, there exists a $\delta > 0$ such that
\begin{equation}
F : D \cap B(0, \delta) \ra B(0, \eta_0) \setminus \{ \Re \eta \geq 0 \}
\end{equation}
is proper. The projection multiplicity $\nu$ is precisely the valency of $F$ as defined above. By the generalized argument principle,
\[
\nu =\frac{1}{2 \pi} \int_{\pa(D \cap B(0, \delta))} d \arg F(z) = \frac{1}{2 \pi} \int_{D \cap B(0, \delta)} d \arg F(z)
\]
where the second equality holds since  $F(z) = z \ov z = \vert z \vert^2 \geq 0$ on $\pa D \cap B(0, \delta)$. To estimate $\nu$, let $G(z) = S(z)/z$ so that $F(z) = z^2 G(z)$ and
\[
\arg F(z) = 2 \arg z + \arg G(z).
\]
For $0 < r < \delta$, define
\[
A(r) = \int_{D \cap \pa B(0, r)} d \arg G(z)
\]
and note that
\[
A(r) = \int_{D \cap \pa B(0, r)} d \arg F(z) - 2 \int_{D \cap \pa B(0, r)} d \arg z \geq 2 \pi \nu - 4 \pi.
\]
To get a better estimate for $A(r)$ as $r \ra 0$, choose $0 < \rho < \delta$ such that $1 - \ep < \vert G(z) \vert < 1 + \ep$ for $z \in D \cap B(0, \rho)$. By the Cauchy--Riemann equations,
\[
A(r) = \int_{D \cap \pa B(0, r)} d \arg G(z)  = \int_{D \cap \pa B(0, r)} \frac{\pa}{\pa s} \arg G(z) \; ds = \int_{D \cap \pa B(0, r)} \frac{\pa}{\pa r} \log \vert G(z) \vert \; r d \theta.
\]
Therefore, for $0 < \eta < \rho$,
\[
\int_{\eta}^{\rho} \frac{A(r)}{r} \; dr = \int_{D \cap \left( B(0, \rho) \setminus \ov{B(0, \eta)} \right)} \frac{\pa}{\pa r} \log \vert G(z) \vert \; d \theta \; dr.
\]
To interchange the order of integration in the last integral, it is essential to verify that $\pa / \pa r \log \vert G(z) \vert$ is integrable on $D \cap (B(0, \rho) \setminus \ov{B(0, \eta)})$. But
\[
\left \vert \frac{\pa}{\pa r} \log \vert G(z) \vert \right\vert \leq \left \vert \left( \log G(z) \right)^{\prime} \right \vert = \vert G^{\prime} / G \vert
\]
and $G^{\prime}(z) = (z^{-2} F(z))^{\prime} = z^{-2} F^{\prime}(z)  - 2z^{-3} F(z)$. By the area formula
\[
\int_{D \cap B(0, \rho)} \vert F^{\prime} \vert^2 \leq  C \nu
\]
where $C > 0$ depends only on $\eta_0$. Since $D$ is bounded, it follows that $F^{\prime} \in L^1(D \cap B(0, \rho))$. All other terms in the expression for $G^{\prime}$ are harmless on $D \cap (B(0, \rho) \setminus \ov{B(0, \eta)})$ and hence $\pa / \pa r \log \vert G(z) \vert$ is integrable on $D \cap (B(0, \rho) \setminus \ov{B(0, \eta)})$.

\medskip

To integrate first with respect to $r$, note that the line $\theta = \theta_0$ intersects $D \cap (B(0, \rho) \setminus \ov{B(0, \eta)})$ in at most a countable number of segments. At the end points of each of these segments, $\vert G(z) \vert = \vert S(z)/z \vert = \vert \ov z / z \vert = 1$, except when $\eta e^{i \theta_0}$ or $\delta e^{i \theta_0}$ are in $D$. In the former case, $\log \vert G(z) \vert = 0$, while in the latter case
\[
-2\ep \le \log(1 - \ep) \le \log \vert G(z) \vert \le \log(1 + \ep) \le 2 \ep
\]
if $\ep$ is small enough. Hence
\[
\left\vert \int_{\eta}^{\rho} \frac{\pa}{\pa r} \log \vert G(z) \vert \; dr \right \vert \le 4 \ep
\]
which shows that
\[
\left \vert  \int_{\eta}^{\rho} \frac{A(r)}{r} \;dr \right \vert \le 8 \pi \ep
\]
for every $0 < \eta < \rho$. To complete the proof, suppose that $\nu > 2$ which means that $A(r) > 0$ for all small $r$. Therefore,
\[
\int_{\eta}^{\rho} \frac{A(r)}{r} \; dr = \left \vert  \int_{\eta}^{\rho} \frac{A(r)}{r} \;dr \right \vert \le 8 \pi \ep.
\]
But then
\[
(2 \pi \nu - 4 \pi) \int_{\eta}^{\rho} \frac{dr}{r} \le \int_{\eta}^{\rho} \frac{A(r)}{r} \; dr \le 8 \pi \ep
\]
and hence
\[
(2 \pi \nu - 4 \pi)(\log \rho -\log \eta) \le 8 \pi \ep
\]
for all $0 < \eta < \rho$. This is a contradiction and so $\nu \le 2$.

\end{proof}


\section{Regularity of the boundary: the case $\nu = 1$}

\no In this case, the map $F$ in (3.3) is a biholomorphism. Also, $F|_{\pa D \cap B(0, \delta)} \geq 0$. Let $\sqrt{F}$ be a biholomorphic map from $D \cap B(0, \delta)$ onto the half disc
\[
H^+ = \{ w \in B(0, \sqrt{\eta_0}) : \Im w > 0\}
\]
and let $z = T_+(w) : H^+ \ra D \cap B(0, \delta)$ be its inverse. Then, the cluster set of the interval $I_{\eta_0} = (-\sqrt{\eta_0}, \sqrt{\eta_0}) \subset \pa H^+$ under $T_+$ is contained in $\pa D \cap B(0, \delta)$. In fact, by Proposition 3.2, $T_+$ extends continuously to $I_{\eta_0}$. Define
\[
T_-(w) = \ov{S \circ T_+(\ov w)}
\]
on the lower half disc
\[
H^- = \{ w \in B(0, \sqrt{\eta_0}) : \Im w < 0\}.
\]
\no {\it Claim:} The map
\[
\ti T(\tau) =
\begin{cases}
T_+(w),  &\text{if $w \in H^+$}\\
T_-(w),  &\text{if $w \in H^-$}
\end{cases}
\]
extends to an injective holomorphic map on $B(0, \sqrt{\eta_0})$. To show this, it suffices to prove that $T_-$ extends continuously to $I_{\eta_0}$ and that $T_+ = T_-$ there. For this, fix an arbitrary $x \in I_{\eta_0}$ and note that as $w \ra x$, $w \in H^-$, $T_+(\ov w)$ has a well defined limit, say $\zeta_0 \in \pa D \cap B(0,  \delta)$. Hence, $\ov{S(\zeta_0)} = {\zeta_0}$ and this implies that
\[
T_-(w) = \ov{S \circ T_+(\ov w)} \ra \ov{S(\zeta_0)} = \zeta_0
\]
as $w \ra x$. Thus, $T_+ = T_-$ on $I_{\eta_0}$ and this means that $\ti T$ extends holomorphically to $B(0, \sqrt{\eta_0})$. The injectivity of $\ti T$ follows in a similar manner. Indeed, fix $x \in I_{\eta_0}$ and let $T_+(w) \ra \zeta_0 \in \pa D \cap B(0, \delta)$ as $w \ra x$, as before. Then
\begin{equation}
\vert x \vert ^2 = \lim_{w \ra x} \vert w \vert^2 = \lim_{w \ra x} \vert (T_+)^{-1}(z) \vert^2 = \lim_{w \ra x} \vert zS(z) \vert = \vert \zeta_0 S(\zeta_0) \vert = \vert \zeta_0 \vert^2 = \vert T_+(x) \vert^2.
\end{equation}
Since $T_+ = \ti T$ on $I_{\eta_0}$ and $\vert T_+(x) \vert = \vert x \vert$, it follows that $\ti T$ is injective and finally, that
\[
\Gamma = \pa D \cap B(0, \delta) = {\ti T}^{-1}(I_{\eta_0})
\]
is a smooth real analytic arc passing through the origin, in this case. This is Case $(1)$ of Sakai's theorem.


\section{Regularity of the boundary: the case $\nu = 2$}

\no In this case, an application of the generalized argument principle (see Corollary 4.2 in \cite{S1}) shows that
\[
\int_{\gamma} d \arg F(z) = 2 \pi \nu n (\gamma; 0) = 4 \pi n(\gamma; 0)
\]
where $\gamma$ is a smooth closed path in $D \cap B(0, \delta)$ and $n(\gamma; 0)$ is its winding number around the origin. This means that
\[
\frac{1}{2} \int_{\gamma} d \arg F(z) = \text{a multiple of} \; 2 \pi
\]
and thus $\pm \sqrt{F(z)} = \pm \sqrt{z S(z)}$ are well defined holomorphic functions on $D \cap B(0, \delta)$. By Proposition $3.2$, both have non-vanishing derivative and they are therefore injective as well on $D \cap B(0, \delta)$. To take them both into account, consider
\[
V = \{(z, w) \in (D \cap B(0, \delta)) \times \mbb C^2 : w^2 = z S(z)\}
\]
which is a pure $1$-dimensional analytic set in $(D \cap B(0, \delta)) \times \mbb C^2$. Let $\pi_1, \pi_2$ be the projections on the $z, w$ variables respectively. The projection $\pi_2 : \ov V \ra \mbb C_w$ is proper; in fact, $\pi_2^{-1}(0) \cap \ov V$ contains $(0,0)$ as an isolated point. Moreover, the limit points of $V$ are contained in
\[
\Si = \{ (z, w) \in \ov V, z \in \Gamma \},
\]
which can be alternately described as follows. On $\Gamma$, $S(z) = \ov z$ and so $w^2 = z S(z) = \vert z \vert^2$. Writing $w =  u+iv$ and $z = x+iy$, this is equivalent to the system of equations
\[
uv = 0, \; u^2 - v^2 = x^2 + y^2.
\]
When $u = 0$, then $x^2 + y^2 = -v^2$ and thus the only solution is the point $(0, 0) \in \ov V$. On the other hand, when $u \not= 0$, then $v = 0$ and thus $u^2 = x^2 + y^2$. To summarize,
\[
\Si = \{(z, w) : u^2 = x^2 + y^2, v = 0 \},
\]
and a calculation shows that $\Si \sm \{(0, 0)\}$ is a smooth maximally totally real manifold in $\mbb C^2$. The Alexander--Becker theorem shows that $\ov V \cap (\Si \sm \{(0,0)\})$ is a locally finite union of smooth real analytic arcs and points. The remarks made between Propositions $3.1$ and $3.2$ apply here as well; some of the smooth arcs in $\ov V \cap (\Si \sm \{(0,0)\})$ are those across which the branches $\sqrt{F(z)}$ are single-valued, while there are others across which $V$ is completed by adding to it its reflection near points of $\Si \sm \{(0,0)\}$.

\medskip

Now fix a branch of the square root of $F(z)$, which we will denote by $\sqrt{F}$, and let $T(w) = (\sqrt{F})^{-1}(w) = \text{some branch of} \;\pi_1 \circ \pi_2^{-1}(w)$. Note that $T$ is holomorphic on $B(0, \sqrt{\eta_0}) \sm I$, for some closed subset $I$ contained in the interval $(-\sqrt{\eta_0}, \sqrt{\eta_0})$. The set $I$ arises as the projection on the $w$-variable of some arcs in $\ov V \cap (\Si \sm \{(0,0)\})$ across which $\sqrt{F(z)}$ is not single-valued. If $\ti V$ be the analytic continuation of $V$ across $\Si \sm \{(0,0)\}$, the projection $\pi_2 : \ti V \ra \mbb C_w$ has discrete fibres and its branch locus is zero-dimensional in $\Si \sm \{(0,0)\}$. It follows that for $u \in I$ away from the branch locus,
\[
\lim_{v \ra 0^+} T(u + iv) = \vert u \vert e^{i \theta_+(u)}
\]
exists; the fact that it equals the given value follows from an argument similar to $(4.1)$ and the description of $\Si$.

\medskip

Since $\ov{T(\ov w)}$ is bounded, Fatou's theorem shows that
\[
\lim_{v \ra 0^-} T(u + iv) = \vert u \vert e^{i \theta_-(u)}
\]
exists for almost every $u \in I$ and hence so does
\[
\lim_{v \ra 0^{\pm}} T(u + iv) = \vert u \vert e^{i \theta_{\pm}(u)}
\]
for almost every $u \in I$. Note that in both cases, the modulus of the limiting values of $T$ equals $\vert u \vert$. There are three cases to consider:

\medskip

\no {\sf Case A:} Suppose that $\vert u \vert e^{i \theta_+(u)} = \vert u \vert e^{i \theta_-(u)}$ for almost every $u \in I$. Then $T$ extends to a holomorphic function on (a possibly smaller disk around the origin in) $B(0, \sqrt{\eta_0})$. The extension is injective near the origin since the origin is not an isolated point in $I$ and $\vert T(u) \vert = \vert u \vert$ for almost every $u \in I$. In fact, $T(B(0, \sqrt{\eta_0}) \sm I) = D \cap B(0, \delta)$ and it maps almost every point on $I$ to $\Gamma$. The closure of $\Gamma$, in this case, uniquely determines a smooth real analytic curve, namely $T((-\sqrt{\eta_0}, \sqrt{\eta_0}))$. This is Case $(2a)$ of Sakai's theorem.

\medskip

Let
\[
P = \left\{ u \in I: \lim_{v \ra 0^{\pm}} T(u + iv) = \vert u \vert e^{i \theta_{\pm}}(u) \; \text{but} \; \vert u \vert e^{i \theta_+(u)} \not= \vert u \vert e^{-i \theta_-(u)} \right\}.
\]
As discussed above, Case A treats the situation when $P$ has zero measure and therefore we may assume that $P$ has positive measure in every interval around the origin. At $u \in P$, let us also note that the function
\[
\psi(w) = T(w) \ov{T(\ov w)}/ w^2
\]
which is holomorphic on $B(0, \sqrt{\eta_0}) \sm I$, satisfies
\[
\lim_{v \ra 0^{\pm}} \psi(u + iv) = \vert u \vert^2 e^{\pm i(\theta_+(u) - \theta_-(u))}/ u^2 = e^{\pm i(\theta_+(u) - \theta_-(u))} \not= 1.
\]
Hence, $\psi \not\equiv 1$. Now consider the set of all circles $C_{x, r}$ centered at points $x \in (-\sqrt{\eta_0}, \sqrt{\eta_0})$ with radius $r > 0$ such that $C_{a, r} \subset B(0, \sqrt{\eta_0})$. Since $P$ has positive measure near the origin, the uniqueness theorem for holomorphic functions shows that there exists a circle in this family, say $C_{a, \rho}$ such that the two diametrically opposite points $a \pm \rho \in P$ and $\psi \not= 1$ at all points of $C_{a, \rho}$. We may assume that $C_{a, \rho}$ contains the origin in its interior. Consider the restriction of $\psi$ to $B(a, \rho) \sm I$, the goal being to determine the cluster set of $C_{a, \rho} \cup I$ under $\psi$.

\medskip

First, $\psi$ maps $C_{a, r} \sm \{a \pm \rho\}$ to a real analytic arc, say $J$ and by the observations made above, its end-points stay away from $z = 1$. At points $u \in I \sm \{(0,0)\}$, $\vert \psi(u) \vert = 1$. Finally, as $w \ra (0,0)$, $w \in B(a, \rho) \sm I$,
\[
\psi(w) = T(w) \ov{T(\ov w)}/ w^2 = \left( T(w)/w \right) \ov{\left( T(\ov w)/\ov w \right)} \ra 1
\]
due to two reasons. One, the definition of $T(w)$ shows that $w = \sqrt{F}(T(w)) = \sqrt{T(w) S(T(w))}$ and hence
\[
T(w)/ w = T(w) / \sqrt{T(w) S(T(w))} = \sqrt{T(w) / S(T(w))}.
\]
Secondly, $T(w) \ra 0$ as $w \ra 0$ (since $\pi_2^{-1}(0) \cap \ov V = (0,0)$) and therefore by Proposition $3.1$, $\psi \ra 1$ as $w \ra 0$, $w \in B(a, \rho) \sm I$. The conclusion of all this is that the cluster set of $C_{a, \rho} \cup I$ under $\psi$ is contained in $J \cup \{\vert z \vert = 1\}$. Put differently,
\[
\psi : B(a, \rho) \sm I \ra \mbb C \sm (J \cup \{\vert z \vert = 1\}
\]
is proper. Also, note that for $u \in (-\sqrt{\eta_0}, \sqrt{\eta_0}) \sm I$, $\psi(u) = \vert T(u) \vert^2/ u^2> 0$.

\medskip

Now, the properness of $\psi$ shows that $I$ has finitely many components near the origin. Indeed, choose a small $\ep > 0$ so that $B(1, \ep) \cap J = \emptyset$. Since $\psi$ is proper, it follows that $\psi^{-1}(B(1, \ep) \sm \{ \vert z \vert = 1 \})$ has finitely many components, say $n$. If $I$ were to have more than $n$ components, say $I_1, I_2, \ldots, I_{n+1}$, we can choose disjoint neighborhoods $U_1, U_2, \ldots, U_{n+1}$ around them respectively so that $\psi \not= 1$ on any $\pa U_i$ and $\psi(\cup \pa U_i) \cap B(1, \ti \ep) = \emptyset$ for some $0 < \ti \ep < \ep$. For each $U_i$, observe that $\psi(u) > 0$ for $u \in (U_i \cap (-\sqrt{\eta_0}, \sqrt{\eta_0})) \sm I$ and $\vert \psi(u) \vert \ra  1$ as $u \ra I$ through $(U_i \cap (-\sqrt{\eta_0}, \sqrt{\eta_0})) \sm I$. Since $\psi(\cup \pa U_i) \cap B(1, \ti \ep) = \emptyset$, it means that each $U_j$ contains at least one component of $\psi^{-1}(B(1, \ti \ep) \sm \{ \vert z \vert = 1 \})$. Thus, the number of connected components of $\psi^{-1}(B(1, \ti \ep) \sm \{ \vert z \vert = 1 \})$ is at least $n+1$, which is a contradiction. Therefore, there is an interval in $I$ containing the origin in its closure. This distinguished interval will still be denoted by $I$. There are two cases to consider:

\medskip

\no {\sf Case B:} The origin is an interior point of $I$. In this case, we may take $I = (-\sqrt{\eta_0}, \sqrt{\eta_0})$. Let
\[
H^{\pm} = \{ w \in B(0, \sqrt{\eta_0}: \Im w > \pm 0 \}
\]
and define
\[
\ti T_1(w) =
\begin{cases}
T(w), &\text{if $w \in H^+$}\\
\ov{S(T(\ov w))}, &\text{if $\ov w \in H^+$}.
\end{cases}
\]
Note that $T(\ov w)$, which occurs in the definition of $\ti T_1$ in the lower half disc, is the reflection of $T(w)$ in the real axis. Then $\ti T_1$ extends holomorphically to $B(0, \sqrt{\eta_0})$. To see this, recall that the set of points $u \in I$ for which
\[
\lim_{v \ra 0^+} T(u + iv)
\]
exists has full measure. Fix $w_0 = u_0 \in I$ from this set. As $w \in H^+$ converges to $w_0$, let $T(w) \ra \zeta_0 \in \pa D \cap B(0, \delta)$. As before, $\vert T(w_0) \vert = \vert w_0 \vert$. Now, let $w_j \in H^-$ converge to $w_0$ and let $T(\ov w_j) = \ti \zeta_j$. Note that $\ti \zeta_j \ra \zeta_0$ since $\ov w_j \ra w_0$. Then
\[
\ov w_j = T^{-1}(\ti \zeta_j)
\]
shows that
\[
\ov{S(T(\ov w_j))} = \ov{S(\ti \zeta_j)} \ra \ov{S(\zeta_0)} = \zeta_0.
\]
Thus, $\ti T_1$ extends holomorphically to $B(0, \sqrt{\eta_0})$. The extension is injective near the origin since $\vert T(w_0) \vert = \vert w_0 \vert$ for almost every $w_0 \in I$. Similarly,
\[
\ti T_2(w) =
\begin{cases}
\ov{S(T(\ov w))}, &\text{if $w \in H^+$}\\
T(w), &\text{if $\ov w \in H^+$}
\end{cases}
\]
extends to a holomorphic map on $B(0, \sqrt{\eta_0})$ and the extension is injective near the origin. The extensions $\ti T_1, \ti T_2$ are not the same, as otherwise
\[
\lim_{v \ra 0^+} T(u + iv) = \lim_{v \ra 0^-} T(u+iv)
\]
on $I$ and this violates the assumption that $P$ has positive measure. In this situation, $D \cap B(0, \delta)$ consists of two components each of which arises as the image of $H^+$ under $\ti T_1, \ti T_2$. The boundaries of these components are precisely $\ti T_1(I)$ and $\ti T_2(I)$ respectively and are hence smooth real analytic arcs. Since $\pi_2^{-1}(0) \cap \ov V = (0, 0)$, it follows that $\ti T_1(0) = \ti T_2(0) = 0$ and hence both arcs pass through the origin. To see that $\ti T_1'(0) = \ti T_2'(0)$, note that for $w \in H^+$ and $\ti T_1(w) = \zeta$,
\[
(\ti T_1(w) - \ti T_1(0))/w = \ti T_1(w)/w = \zeta/w = \zeta / (\ti T_1)^{-1}(\zeta) = \zeta / \sqrt{\zeta S(\zeta)}.
\]
Similarly, for $\ov w \in H^+$ and $\ti T_2(w) = T(w) = \ti \zeta$,
\[
(\ti T_2(w) - \ti T_2(0))/w = \ti T_2(w)/w = \ti \zeta / w = \ti \zeta / T^{-1}(\ti \zeta) = \ti \zeta / \sqrt{\ti \zeta S(\ti \zeta)}.
\]
Since both $\zeta, \ti \zeta \ra 0$ as $w \ra 0$, it follows from Proposition $3.1$ that $\ti T_1'(0) = \ti T_2'(0)$. This is Case $(2b)$ of Sakai's theorem.

\medskip

\no {\sf Case C:} The origin is an end point of $I$. In this case, we may assume that $I = [0, \sqrt{\eta_0})$. The argument is similar in spirit to the case when $\nu = 1$. Let $\sqrt[4]{F}$ (which is the same thing as $\sqrt{T}$) be chosen so that it maps $D \cap B(0, \delta)$ biholomorphically to $H^+$, where
\[
H^+ = \{w \in B(0, \sqrt[4]{\eta_0}): \Im w > 0\}
\]
and let $\ti T(w) = (\sqrt[4]{F})^{-1}(w)$. As before, it can be checked that
\[
T_{\ast}(w) =
\begin{cases}
\ti T(w), &\text{if $w \in H^+$}\\
\ov{S(\ti T(\ov w))}, &\text{if $\ov w \in H^+$}
\end{cases}
\]
extends holomorphically to $B(0, \sqrt[4]{\eta_0})$. By the definition of $\ti T$, it follows that if $\ti T(w) = z$ then
\[
w = (\ti T)^{-1}(z) = \sqrt[4]{z S(z)}.
\]
Taking limits as $w \ra w_0 \in (-\sqrt[4]{\eta_0}, \sqrt[4]{\eta_0})$ and $z \ra z_0 \in \pa D \cap B(0, \delta)$, we see that
\[
\vert w_0 \vert^4 = z_0 S(z_0) = \vert z_0 \vert^2
\]
which implies that $\vert \ti T(w_0) \vert = \vert w_0 \vert^2$. This shows that the extended map $\ti T$ is not injective near the origin; in fact, it has a zero of order two at the origin. Further, as observed earlier, $\ti T$ is injective on $H^+$. Suppose that $\ti T$ fails to be injective on $\ov H^+$ near the origin. In this case, there will exist $\al_j, \beta_j \in (-\sqrt[4]{\eta_0}, \sqrt[4]{\eta_0})$ such that both $\al_j, \beta_j \ra 0$ and $\ti T(\al_j) = \ti T(\beta_j)$. Then
\[
\vert \ti T(\al_j) \vert = \vert \ti T(\beta_j) \vert = \vert \al_j \vert^2 = \vert \beta_j \vert^2
\]
and hence $\al_j = \pm \beta_j$. The only possibility then is that $\al_j = - \beta_j$ for all $j$. By the uniqueness theorem, $\ti T(x) = \ti T(-x)$ for all $x \in (-\sqrt[4]{\eta_0}, \sqrt[4]{\eta_0})$ and this implies that
\[
\lim_{v \ra 0^+} T(u + iv) = \lim_{v \ra 0^-} T(u+iv)
\]
on $I$. This is a contradiction. The remaining possibility is that $\alpha_j \in (-\sqrt[4]{\eta_0}, \sqrt[4]{\eta_0})$, $\beta_j \in H^+$ and $\ti T(\al_j) = \ti T(\beta_j)$. We may assume that $T'_{\ast}(w) \not= 0$ for $w \not= 0$. Then, $\ti T$ is a locally injective near $\al_j$ and $\beta_j$. As $\ti T(\al_j) = \ti T(\beta_j)$ by assumption, there will exist points $a, b \in H^+$ near $\al_j, \beta_j$ respectively such that $\ti T(a) = \ti T(b)$ and this contradicts the injectivity of $\ti T$ on $H^+$. This is Case $(3)$ of Sakai's theorem.

\medskip

We conclude by revisiting the discussion about the valency of $S(z)$ that was left incomplete in Section $2$. The remaining case, as mentioned there, is when $w \in \pa D$. Let $w = 0$ for brevity. There are two cases to consider. First, if $w = 0$ is either regular or degenerate, then $S$ is locally invertible near all points in $S^{-1}(0)$ as the discussion in these cases shows. Also, there is a unique point in $A$ that lies over $w = 0$, namely $(0, S(0)) = (0, 0)$ and hence there must be exactly $N-1$ points in $A^{\ast}$ that lie over $w = 0$. Thus, the valency of $S$ is $N-1$. Second, if $w = 0$ is a double point, then $D$ is locally the union of two components bounded by smooth real analytic arcs that are tangent to each other at $w = 0$. The Schwarz function $S(z)$, restricted to each of these components admits an injective holomorphic extension across the origin. The two extensions are distinct as germs but each satisfies $S(0) = 0$. Again, there is a unique point in $A$ that lies over $w=0$, namely $(0, S(0)) = (0, 0)$. Hence, there must be exactly $N-1$ points in $A^{\ast}$ that lie over $w=0$ and this means that the valency is again $N-1$. Lastly, when $w=0$ is a cusp, the discussion in Case $(3)$ shows that $S(z) = 0$ has a double root at the origin. Then there are two points in $A$ and hence exactly $N-2$ points in $A^{\ast}$ that lie over $w=0$. In this case, the valency is $N-2$.


\section{Concluding Remarks}

\no Working with the graph of $S(z)$ and appealing to the Alexander--Becker theorem shows that a classical quadrature domain can be realised as an open subset of a pure $1$-dimensional analytic set in $\mbb P^1 \times \mbb P^1$. Earlier work of Gustafsson--Putinar \cite{GP1} shows that such quadrature domains admit a canonical rational embedding in the exterior of the unit ball in $\mbb C^N$, $N$ being the order of the quadrature domain. Their work is perhaps the closest in spirit to the presentation here. In fact, in Section $5$, pp. $208$ of \cite{GP1}, they write that {\it there are also other ways to inject quadrature domains into projective spaces so that the Schwarzian reflection becomes a geometric object} and go on to remark that {\it perhaps the most natural way is via the injection $z \mapsto (z, S(z))$}. In hindsight, this work can be seen as an exploration of this comment. Another approach to the boundary regularity question can be found in \cite{GP2}.

\medskip

It is also possible to study more general quadrature domains that are associated with complex measures on the plane. Sakai \cite{S2} has a boundary regularity theorem for such quadrature domains as well and as an application of such results, we refer the reader to \cite{S3}. Providing an alternate proof of this more general regularity theorem within the framework discussed here presents new difficulties and addressing them seems an interesting enterprise that we leave for the future.


\end{document}